\theoremstyle{plain}
\newtheorem{theorem}{Theorem}[section]
\newtheorem*{theorem*}{Theorem}
\newtheorem{lemma}[theorem]{Lemma}
\newtheorem{proposition}[theorem]{Proposition}
\newtheorem*{proposition*}{Proposition}
\newtheorem*{corollary*}{Corollary}
\newtheorem{remark*}{Remark}
\newtheorem{defn*}{Definition}
\def\colim{\operatornamewithlimits{colim}}
\def\hocolim{\operatornamewithlimits{hocolim}}
\def\geom{\operatorname{geom}}
\def\Gr{\mathbf{Gr}}
\def\HGr{\mathbf{HGr}}
\def\ZZ{\mathbb{Z}}
\def\HP{\mathbf{HP}}
\def\ZZ{\mathbf{Z}}
\def\AA{\mathbf{A}}
\def\GG{\mathbf{G}}
\def\PP{\mathbf{P}}
\def\Th{\mathbf{Th}}
\newcommand{\KT}{\mathbf{KW}}
\newcommand{\KQ}{\mathbf{KQ}}
\newcommand{\BO}{\mathbf{BO}}
\newcommand{\s}{\mathsf{s}}
\newcommand{\MZZ}{\mathbf{MZ}}
\begin{document}

\title{Cellularity of hermitian $K$-theory and Witt-theory}
\author{Oliver R{\"o}ndigs, Markus Spitzweck, Paul Arne {\O}stv{\ae}r}
\date{March 16, 2016}
\maketitle

\begin{abstract}
Hermitian $K$-theory and Witt-theory are cellular in the sense of stable motivic homotopy theory over any base scheme without points of characteristic two.
\end{abstract}

\section{Introduction}

The notion of a cellular object in motivic homotopy theory is intrinsically linked to the geometry of motivic spheres $S^{p,q}$ \cite{MR2153114}.
Suppose the smooth scheme $X$ admits a filtration by closed subschemes
\begin{equation*}
\emptyset
\subset
X_{0}
\subset
\dotsm
\subset
X_{n-1}
\subset
X_{n}
=
X,
\end{equation*}
where $X_{i}\smallsetminus X_{i-1}$ is a disjoint union of affine spaces $\AA^{n_{ij}}$.
Examples of such filtrations arise in the context of Bia{\l}ynicki-Birula decompositions for $\GG_{m}$-action on smooth projective varieties \cite{MR0366940}, 
cf.~\cite{MR2178658} for a more recent implementation.
By homotopy purity \cite[Theorem 3.2.23]{MR1813224} for Thom spaces of normal bundles of closed embeddings, 
there is a homotopy cofiber sequence 
\begin{equation*}
\xymatrix{
X\smallsetminus X_{i}
\ar[r] & 
X\smallsetminus X_{i-1}
\ar[r] & 
\Th(\mathcal{N}_{i}).
}
\end{equation*}
By assumption the normal bundle $\mathcal{N}_{i}$ is trivial. 
Thus the splitting $\Th(\mathcal{N}_{i})\cong\bigvee_{j}S^{2n_{ij},n_{ij}}$ and the two-out-of-three property for stably cellular objects \cite[Lemma 2.5]{MR2153114} 
imply inductively that $X$ is stably cellular in the sense of \cite[Definition 2.10]{MR2153114}.

In this paper we employ a similar strategy to prove cellularity for Thom spaces of direct sums of tautological sympletic bundles over quaternionic Grassmannians. 
This allows us to show cellularity of the motivic spectra representing hermitian $K$-theory and Witt-theory \cite{MR2122220}.
By a base scheme we mean any regular noetherian separated scheme of finite Krull dimension.
\begin{theorem}
\label{theorem:KQcellular}
Suppose all points on the base scheme have residue characteristic unequal to two. 
Then hermitian $K$-theory $\KQ$ and Witt-theory $\KT$ are cellular motivic spectra.
\end{theorem}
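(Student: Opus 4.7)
The plan is to reduce the theorem to cellularity of certain Thom spaces over quaternionic Grassmannians, and then to establish the latter by mimicking the Bia\l ynicki-Birula strategy of the introduction in the symplectic setting. Via the geometric model of Panin--Walter \cite{MR2122220}, the motivic spectra $\KQ$ and $\KT$ are built as (homotopy) colimits of (de)suspensions of the Thom spaces $\Th(\mathcal{U}^{\oplus k})$, where $\mathcal{U}$ is the tautological rank-two symplectic subbundle over the quaternionic Grassmannian $\HGr(r,n)$. Since stably cellular motivic spectra are closed under (homotopy) colimits and satisfy the two-out-of-three property for cofiber sequences \cite[Lemma~2.5]{MR2153114}, it suffices to show that each such Thom space is stably cellular.

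The main geometric step is to exhibit a filtration $\emptyset \subset Y_0 \subset \dotsm \subset Y_N = \HGr(r,n)$ by closed subschemes whose successive complements $Y_i\smallsetminus Y_{i-1}$ are disjoint unions of affine spaces with trivial normal bundles in $\HGr(r,n)\smallsetminus Y_{i-1}$, and such that the restriction of $\mathcal{U}^{\oplus k}$ to each open stratum is also a trivial vector bundle. I would produce such a filtration from a Bia\l ynicki-Birula decomposition of $\HGr(r,n)$ associated with a generic one-parameter subgroup $\GG_m\subset\operatorname{Sp}_{2n}$, working over $\operatorname{Spec}\ZZ[1/2]$ and pulling back to an arbitrary regular noetherian base where $2$ is invertible at every point. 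Inductive application of homotopy purity to the induced filtration on the total space of $\mathcal{U}^{\oplus k}$, exactly as in the introduction, then expresses $\Th(\mathcal{U}^{\oplus k})$ as an iterated cofiber extension of wedges of motivic spheres $S^{2a,a}$, and hence as a stably cellular object.

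The principal obstacle I anticipate is the verification that both the normal bundles to the strata in $\HGr(r,n)$ and the restrictions of $\mathcal{U}^{\oplus k}$ to each open stratum are genuinely trivial, not merely locally trivial. This is not formal over a general regular base and requires exploiting the concrete geometric description of quaternionic Grassmannians: one must work with explicit matrix charts on $\HGr(r,n)$ adapted to a symplectic Schubert-type stratification, exhibit global trivializations on each chart, and carry this through to direct sums $\mathcal{U}^{\oplus k}$ and the induced stratification on their total spaces. Once this bookkeeping is complete, the two-out-of-three principle of \cite[Lemma~2.5]{MR2153114} together with closure under (homotopy) colimits immediately yields stable cellularity of $\Th(\mathcal{U}^{\oplus k})$, and hence of $\KQ$ and $\KT$.
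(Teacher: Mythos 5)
Your opening reduction is essentially the correct one and agrees with the paper: via the Panin--Walter geometric model $\BO^{\geom}$ (the reference is \cite{panin-walterKO}, not \cite{MR2122220}), $\KQ$ is a homotopy colimit of suspensions of $\Sigma^{\infty}(\mathbf{Z}\times\HGr)$, Witt-theory is then the homotopy colimit of $\KQ\to\Sigma^{-1,-1}\KQ\to\dotsm$ inverting $\eta$ (Ananyevskiy), and everything comes down to cellularity of the Thom spectra $\Sigma^{\infty}\Th(\mathcal{U}_{r,n}^{\oplus m})$. The gap is in your main geometric step. A Bia{\l}ynicki-Birula stratification into affine cells with trivial normal bundles is not available here: $\HGr(r,n)$ is smooth \emph{affine}, not projective, so for a one-parameter subgroup $\GG_{m}\subset\operatorname{Sp}_{2n}$ the limits $\lim_{t\to 0}t\cdot x$ fail to exist for general $x$ and the attracting sets do not exhaust the variety; restricting the Schubert paving of the ambient $\Gr(2r,2n)$ to the nondegeneracy locus destroys the cell structure. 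More fundamentally, the natural decomposition of $\HGr(r,n)$ (Panin--Walter) has as its closed stratum not an affine space but the total space $N^{+}$ of $\mathcal{U}_{r,n-1}$ over a smaller quaternionic Grassmannian, and the normal bundle of $N^{+}$ is the pullback $\pi_{+}^{\ast}\mathcal{U}_{r,n-1}$ of the tautological symplectic bundle, which is \emph{not} trivial. The ``cells'' of quaternionic Grassmannians are Thom spaces of symplectic bundles rather than motivic spheres, and no chart-by-chart bookkeeping will trivialize them; this is exactly the obstacle you flag as ``principal'' and leave unresolved, and it is not a technicality but the point where your strategy breaks.

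The device that replaces the triviality you were hoping for is a double induction in which the exponent $m$ is a moving part: one proves that $\Sigma^{\infty}\Th(\mathcal{U}_{r,n}^{\oplus m})$ is finite cellular \emph{simultaneously for all} $m\geq 0$. The cofiber sequence attached to $N^{+}\subset\HGr(r,n)$ identifies the cofiber of $\Th(\mathcal{U}_{r,n}^{\oplus m}\vert Y)\to\Th(\mathcal{U}_{r,n}^{\oplus m})$ with $\Th(\mathcal{U}_{r,n-1}^{\oplus(m+1)})$ --- the exponent increases precisely because the nontrivial normal bundle contributes one more copy of the tautological bundle --- while the open complement $Y$ is an iterated affine-bundle torsor over a quaternionic Grassmannian of smaller $r$ on which $\mathcal{U}_{r,n}$ pulls back to $\mathcal{O}^{2}\oplus q^{\ast}\mathcal{U}_{r-1,n}$, yielding $\Sigma^{2m,m}\Sigma^{\infty}\Th(\mathcal{U}_{r-1,n}^{\oplus m})$. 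Both pieces are then handled by induction on $(r,n)$ using the two-out-of-three property. Without this recursion (or an equivalent substitute for the missing trivializations), your argument does not close.
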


For a related antecedent result showing cellularity of algebraic $K$-theory, 
see \cite[Theorem 6.2]{MR2153114}.
The proof of Theorem \ref{theorem:KQcellular} exploits the geometry of quaternionic Grassmannians and the explicit model for hermitian $K$-theory from \cite{panin-walterKO}.

Recent applications of $\KQ$ and $\KT$ concern computations of stable homotopy groups of motivic spheres \cite{MR2061856}, \cite{MR3255457}, \cite{roendigs-spitzweck-oestvaer.1line}, 
and a proof of the Milnor conjecture on quadratic forms \cite{roendigs-oestvaer.hermitian}.
For cellular motivic spectra one has the powerful fact that stable motivic weak equivalences are detected by $\pi_{\ast,\ast}$-isomorphisms \cite[Corollary 7.2]{MR2153114}.
Our main motivation for proving Theorem \ref{theorem:KQcellular} is that it is being used in the computation of the slices of $\KQ$ in \cite[Theorem 2.14]{roendigs-spitzweck-oestvaer.1line}. 
In terms of motivic cohomology with integral and mod-$2$ coefficients, 
the result is
\begin{equation*}
\label{equation:hermitianktheoryslices}
\s_{q}(\KQ)
\cong
\begin{cases}
\Sigma^{2q,q}\MZZ
\vee  
\bigvee_{i<\frac{q}{2}}\Sigma^{2i+q,q}\MZZ/2 & \text{$q$ even}  \\
\bigvee_{i<\frac{q+1}{2}}\Sigma^{2i+q,q}\MZZ/2 & \text{$q$ odd.}  \\
\end{cases}
\end{equation*}
In turn, 
this is an essential ingredient in our proof of Morel's $\pi_{1}$-conjecture in \cite{roendigs-spitzweck-oestvaer.1line}. 
It is an interesting problem to make sense of Theorem \ref{theorem:KQcellular} without any assumptions on the points of the base scheme.

This short paper is organized into Section \ref{section:cellularobjects} on basic properties of motivic cellular spectra, 
Section \ref{section:quaternionicGrassmannians} on the geometry of quaternionic Grassmannians, 
and Section \ref{section:hermitianKtheoryandWitttheory} on hermitian $K$-theory and Witt-theory.

\section{Cellular objects}
\label{section:cellularobjects}

The subcategory of cellular spectra in the motivic stable homotopy category is the smallest full localizing subcategory that contains all suspensions of the sphere spectrum, 
cf.~\cite[\S2.8]{MR2153114}.
For our purposes it suffices to know four basic facts about cellular motivic spectra.
First we recall part (3) of Definition 2.1 in \cite{MR2153114}.
\begin{lemma}
\label{lemma:colimitcellular}
The homotopy colimit of a diagram of cellular motivic spectra is cellular.
\end{lemma}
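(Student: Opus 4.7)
The plan is essentially to unwind the definition of the cellular subcategory and observe that closure under homotopy colimits is built in. Recall that cellular motivic spectra are defined as the smallest full \emph{localizing} triangulated subcategory of the motivic stable homotopy category containing the suspensions $S^{p,q}$ of the sphere spectrum. By convention, a localizing subcategory is a triangulated subcategory that is closed under arbitrary coproducts (and hence, being triangulated, also under retracts in the stable motivic setting). This is precisely clause~(3) of Dugger--Isaksen's Definition~2.1 in \cite{MR2153114}, which the lemma restates; so the content of the proof is to explain why ``closed under coproducts and cofiber sequences'' is the same as ``closed under homotopy colimits.''

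First I would reduce an arbitrary homotopy colimit to the two basic shapes. Using the Bousfield--Kan formula (or equivalently the standard presentation of a homotopy colimit over a small diagram as the coequalizer of two maps between coproducts indexed by the morphisms and objects of the shape), any homotopy colimit in a stable model category can be built as the cofiber of a map between coproducts of objects in the diagram, followed by a filtered homotopy colimit of finite stages. Thus it suffices to verify two cases: arbitrary coproducts, and sequential (or filtered) homotopy colimits along cofibrations, both of which follow directly from closure under coproducts and cofiber sequences, the latter by realizing a sequential hocolim as the cofiber of $1-\mathrm{shift}$ on a countable coproduct.

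Combining these two reductions, any homotopy colimit of cellular motivic spectra lies in the localizing subcategory generated by the sphere suspensions, which is the subcategory of cellular motivic spectra. This gives the lemma. There is no substantive obstacle here: the statement is essentially a repackaging of the definition, and the only ``work'' is recalling that in a stable (triangulated, cocomplete) setting the closure properties defining a localizing subcategory automatically yield closure under all homotopy colimits. For this reason Dugger and Isaksen simply fold the conclusion into their definition, which is exactly what the paper is citing.
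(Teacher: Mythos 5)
Your proposal is correct, and your closing remark identifies exactly what the paper does: the lemma is literally clause (3) of Definition 2.1 in \cite{MR2153114}, so the paper's ``proof'' consists of recalling that definition and nothing more. The bulk of your argument therefore addresses a slightly different, but worthwhile, point: reconciling the two standard descriptions of cellularity. The paper's Section 2 introduces cellular spectra as the smallest full localizing subcategory containing all suspensions of the sphere spectrum, whereas Dugger--Isaksen's Definition 2.1 builds closure under homotopy colimits directly into the definition; you start from the former and derive closure under arbitrary homotopy colimits by the standard d\'evissage (sequential homotopy colimits as the cofiber of $1-\mathrm{shift}$ on a countable coproduct, and general shapes via the skeletal filtration of the simplicial replacement, each stage being an extension of coproducts of diagram objects). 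That argument is sound and is essentially the content of the equivalence asserted in \cite[\S 2.8]{MR2153114}, which the paper takes for granted. One small caution on phrasing: the ``coequalizer of two maps between coproducts'' should be replaced by the geometric realization of the simplicial replacement, filtered by skeleta, since a strict coequalizer is not a homotopy-invariant construction and does not in general compute the homotopy colimit; with that adjustment your reduction is the standard one. In short, your route proves the equivalence of definitions that the paper's one-line proof presupposes, and under Dugger--Isaksen's own definition there is nothing left to prove.
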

The second fact is a specialization of \cite[Lemma 2.4]{MR2153114}.
\begin{lemma}
\label{lemma:suspensioncellular}
Let $E$ be a motivic spectrum and let $p$, $q$ be integers.
Then $E$ is cellular if and only if its $(p,q)$-suspension $\Sigma^{p,q}E$ is cellular.
\end{lemma}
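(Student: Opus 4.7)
The plan is to observe that the $(p,q)$-suspension functor is smashing with the invertible sphere $S^{p,q}$, an auto-equivalence of the motivic stable homotopy category, and that cellularity is preserved by such operations. Concretely, I write $\Sigma^{p,q}(-)\simeq S^{p,q}\wedge(-)$ and note that its inverse is $\Sigma^{-p,-q}(-)\simeq S^{-p,-q}\wedge(-)$. Consequently it suffices to prove one implication, say that $E$ cellular forces $\Sigma^{p,q}E$ cellular; applying the result to $(-p,-q)$ and to the cellular spectrum $\Sigma^{p,q}E$ then yields the converse since $\Sigma^{-p,-q}\Sigma^{p,q}E\simeq E$.

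For the forward implication I would use a standard Bousfield-style argument. Consider the full subcategory
\begin{equation*}
\mathcal{C}_{p,q}
\;=\;
\bigl\{\,F \;:\; S^{p,q}\wedge F \text{ is cellular}\,\bigr\}
\end{equation*}
of the motivic stable homotopy category. Because smashing with $S^{p,q}$ is a left adjoint, it commutes with arbitrary homotopy colimits; combined with Lemma \ref{lemma:colimitcellular} this shows $\mathcal{C}_{p,q}$ is closed under homotopy colimits, and closure under equivalences and triangles is immediate. Hence $\mathcal{C}_{p,q}$ is a localizing subcategory.

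Next I would check that $\mathcal{C}_{p,q}$ contains all spheres: for any integers $a,b$ one has $S^{p,q}\wedge S^{a,b}\simeq S^{p+a,q+b}$, which is a sphere and hence cellular by definition. Since the cellular spectra form the smallest localizing subcategory containing every sphere, it follows that $\mathcal{C}_{p,q}$ contains every cellular spectrum. In particular $E\in\mathcal{C}_{p,q}$, which is the desired conclusion $\Sigma^{p,q}E$ is cellular.

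There is no genuinely hard step here; the only substantive input is that the smash product preserves homotopy colimits in each variable, together with the characterization of cellular spectra as a localizing subcategory generated by the bigraded spheres. The argument is purely formal and, as noted in the excerpt, is a direct specialization of \cite[Lemma 2.4]{MR2153114}.
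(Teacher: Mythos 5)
Your proof is correct and takes the expected route: the paper offers no argument of its own here, simply citing Dugger--Isaksen, and your observation that $S^{p,q}\wedge(-)$ preserves homotopy colimits and carries spheres to spheres (so the class $\mathcal{C}_{p,q}$ is localizing and contains the generators), combined with invertibility of $S^{p,q}$ for the converse, is precisely the standard justification being invoked.
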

The third fact is a specialization of \cite[Lemma 2.5]{MR2153114}.
\begin{lemma}
\label{lemma:homotopycofibersequence}
If $E\to F\to G$ is a homotopy cofiber sequence of motivic spectra such that any two of $E$, $F$, and $G$ are cellular, 
then so is the third.
\end{lemma}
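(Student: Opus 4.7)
The plan is to first handle the single case where $E$ and $F$ are assumed cellular, deducing cellularity of $G$, and then to leverage the triangulated structure of the motivic stable homotopy category to reduce the other two configurations to this one via rotation of homotopy cofiber sequences together with Lemma \ref{lemma:suspensioncellular}.

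For the initial case, note that $G$ is equivalent to the homotopy pushout of the span $\ast \leftarrow E \to F$, where $\ast$ denotes the zero spectrum. Since cellular spectra form a localizing subcategory containing the sphere spectrum, the zero object lies in this subcategory (for instance, as the homotopy cofiber of the identity on the sphere spectrum, or as the homotopy colimit of the empty diagram). Thus $\ast$, $E$, and $F$ are all cellular, and Lemma \ref{lemma:colimitcellular} applied to the pushout diagram exhibiting $G$ as a homotopy colimit shows $G$ is cellular.

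For the other two cases, rotation in the motivic stable homotopy category produces the homotopy cofiber sequence $F \to G \to \Sigma^{1,0} E$. If $F$ and $G$ are cellular, the first case applied to this rotated sequence shows $\Sigma^{1,0} E$ is cellular, and Lemma \ref{lemma:suspensioncellular} then yields cellularity of $E$. If instead $E$ and $G$ are cellular, Lemma \ref{lemma:suspensioncellular} first gives cellularity of $\Sigma^{1,0} E$; applying the first case to the rotated sequence then shows $\Sigma^{1,0} F$ is cellular; and a final appeal to Lemma \ref{lemma:suspensioncellular} produces cellularity of $F$.

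The argument uses only formal features of the stable motivic homotopy category, so there is no substantive obstacle. The only points requiring care are the identifications of the zero spectrum as cellular and of the homotopy cofiber as a homotopy colimit, plus the rotation of cofiber sequences — all of these are standard consequences of working in a stable model category.
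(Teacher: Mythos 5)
Your argument is correct and is essentially the proof of Lemma 2.5 in Dugger--Isaksen, which the paper cites rather than reproving: it realizes the cofiber as the homotopy pushout of $\ast \leftarrow E \to F$ for the first case, then handles the other two by rotating the triangle and invoking Lemma \ref{lemma:suspensioncellular}. The only cosmetic point is that the case where $E$ and $G$ are cellular really uses the twice-rotated sequence $G \to \Sigma^{1,0}E \to \Sigma^{1,0}F$, which you apply implicitly.
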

Finally, we recall Lemma 3.2 in \cite{MR2153114}.
\begin{lemma}
\label{lemma:disjointcellular}
If $E_{i}$ is a cellular motivic spectrum for all $i\in I$, 
then $\coprod_{i\in I} E_{i}$ is cellular.
\end{lemma}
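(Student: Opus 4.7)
The plan is to bootstrap from Lemma \ref{lemma:homotopycofibersequence} and Lemma \ref{lemma:colimitcellular} in two steps: first treat finite coproducts by induction via the cofiber-sequence criterion, and then extend to arbitrary coproducts by realizing them as filtered homotopy colimits of finite wedges.

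For two cellular spectra $E_{1}$ and $E_{2}$, the wedge sum sits in the homotopy cofiber sequence
\begin{equation*}
E_{1} \longrightarrow E_{1}\vee E_{2} \longrightarrow E_{2},
\end{equation*}
whose outer terms are cellular, so Lemma \ref{lemma:homotopycofibersequence} forces the middle term to be cellular as well. An induction on $|F|$ then shows that $\bigvee_{i\in F}E_{i}$ is cellular for every finite subset $F\subset I$.

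For a general index set $I$, the plan is to identify
\begin{equation*}
\coprod_{i\in I}E_{i}
\;\simeq\;
\hocolim_{\substack{F\subset I\\|F|<\infty}} \bigvee_{i\in F}E_{i}
\end{equation*}
as a filtered homotopy colimit, indexed by the poset of finite subsets of $I$ ordered by inclusion. Since each finite wedge is cellular by the previous step, Lemma \ref{lemma:colimitcellular} delivers the conclusion.

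The main obstacle I anticipate is the last displayed identification: one must verify that the ordinary coproduct in the motivic stable homotopy category genuinely agrees with the filtered homotopy colimit of its finite sub-wedges. This amounts to checking that coproducts of cofibrant spectra are already homotopy coproducts in the chosen motivic model structure, which is a standard but non-tautological feature of the Jardine--Hovey model structure on motivic symmetric spectra. Granting this, the rest of the argument is a purely formal manipulation with cofiber sequences and homotopy colimits; alternatively, one could sidestep the model-categorical issue entirely by observing that the subcategory of cellular spectra, being localizing in a compactly generated triangulated category, is closed under arbitrary small coproducts by definition.
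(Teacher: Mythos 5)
Your argument is correct, but note that the paper itself offers no proof of this lemma at all---it is simply quoted as Lemma 3.2 of Dugger--Isaksen---so any proof you supply is ``different from the paper's.'' Your two-step route (finite wedges via the split cofiber sequence $E_{1}\to E_{1}\vee E_{2}\to E_{2}$ and Lemma \ref{lemma:homotopycofibersequence}, then arbitrary coproducts as the filtered homotopy colimit of finite sub-wedges via Lemma \ref{lemma:colimitcellular}) is sound, and you correctly flag the one non-formal point, namely that the point-set coproduct of (cofibrant) motivic spectra computes the homotopy coproduct. However, the detour through finite subsets is unnecessary: a coproduct \emph{is} a homotopy colimit, namely the homotopy colimit of the diagram indexed by the discrete category $I$, so Lemma \ref{lemma:colimitcellular} (which is part (3) of the Dugger--Isaksen definition of the cellular class, i.e.\ closure under \emph{all} homotopy colimits) applies in one step, with the same cofibrancy caveat and nothing else to check. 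This is essentially the ``alternatively'' you mention in your last sentence; that one-line observation is the whole proof, and the cofiber-sequence induction buys you nothing extra here. What your longer argument does buy is robustness in settings where closure is only assumed under cofiber sequences and filtered (rather than arbitrary) homotopy colimits, but that generality is not needed for the definition used in this paper.
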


\section{Quaternionic Grassmannians}
\label{section:quaternionicGrassmannians}

The quaternionic Grassmannian $\HGr(r,n)$ is the open subscheme of the ordinary Grassmannian $\Gr(2r,2n)$ parametrizing $2r$-dimensional subspaces of the trivial vector bundle 
$\mathcal{O}^{\oplus 2n}$ on which the standard symplectic form is nondegenerate.
It is smooth affine of dimension $4r(n-r)$ over the base scheme.
Let $\mathcal{U}_{r,n}$ be short for the tautological symplectic subbundle of rank $2r$ on $\HGr(r,n)$.
It is the restriction to $\HGr(r,n)$ of the tautological subbundle of $\Gr(2r,2n)$ together with the restriction to $\mathcal{U}_{r,n}$ of the standard symplectic form on 
$\mathcal{O}^{\oplus 2n}$.

More generally, 
to every symplectic bundle $(\mathcal{E},\phi)$ one associates the quaternionic Grassmannian $\HGr(r,\mathcal{E},\phi)$;
it is the open subscheme of the Grassmannian $\Gr(2r,\mathcal{E})$ parametrizing $2r$-dimensional subspaces of the fibers of $\mathcal{E}$ on which $\phi$ is nondegenerate.
Associated to the trivial rank $2n-2$ symplectic bundle $(\mathcal{E},\psi)$ is the bundle $\mathcal{F}=\mathcal{O}\oplus\mathcal{E}\oplus\mathcal{O}$ equipped with the 
direct sum of $\psi$ and the hyperbolic symplectic form, 
i.e., 
\[
\begin{bmatrix}
0   &  0     &  1 \\
0   &  \psi  &  0 \\
-1  &  0     &  0
\end{bmatrix}.
\]
For simplicity we write $\HGr(\mathcal{E})$ for $\HGr(r,\mathcal{E},\psi)$ and likewise for $\mathcal{F}$.

The normal bundle $N$ of the embedding $\HGr(\mathcal{E})\subset\HGr(\mathcal{F})$ is the tensor product $\mathcal{U}_{\mathcal{E}}^{\vee}\otimes\mathcal{O}^{\oplus 2}$ for the dual 
of the tautological symplectic subbundle of rank $2r$ on $\HGr(\mathcal{E})$.
Theorem 4.1 in \cite{panin-walterGP} shows that $N$ is naturally isomorphic to an open subscheme of $\Gr(2r,\mathcal{F})$ and there is a decomposition $N=N^{+}\oplus N^{-}$;
here, 
$N^{+}=\HGr(\mathcal{F})\cap\Gr(2r,\mathcal{O}\oplus\mathcal{E})$ and $N^{-}=\HGr(\mathcal{F})\cap\Gr(2r,\mathcal{E}\oplus\mathcal{O})$ have intersection $\HGr(\mathcal{E})$.
Thus there are natural vector bundle isomorphisms $N^{+}\cong N^{-}\cong\mathcal{U}_{r,n-1}$ and the normal bundle $\mathcal{N}$ of $N^{+}$ in $\HGr(\mathcal{F})$ is isomorphic to 
$\pi_{+}^{\ast}\mathcal{U}_{r,n-1}$ for the bundle projection $\pi_{+}\colon N^{+}\to\HGr(\mathcal{E})$.
Moreover, 
there is a vector bundle isomorphism between the restriction $\mathcal{U}_{r,n}\vert N^{+}$ of $\mathcal{U}_{r,n}$ to $N^{+}$ and $\pi_{+}^{\ast}\mathcal{U}_{r,n-1}$.
For $r\leq n-1$, 
let $Y$ denote the complement of $N^{+}$ in $\HGr(\mathcal{F})$ \cite[(5.1)]{panin-walterGP}.

\begin{proposition}
\label{proposition:finitecellular}
For $m\geq 0$ the suspension spectrum of the Thom space of the vector bundle $\mathcal{U}_{r,n}^{\oplus m}$ on $\HGr(r,n)$ is a finite cellular spectrum.
In particular, 
$\Sigma^{\infty}\HGr(r,n)_{+}$ is a cellular spectrum.
\end{proposition}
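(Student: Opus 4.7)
The plan is to prove the proposition by induction on $n$, establishing the stronger statement that for every $0 \leq r \leq n$ and every $m \geq 0$, the Thom spectrum $\Th(\mathcal{U}_{r,n}^{\oplus m})$ is finite cellular. The base cases $r = 0$ and $r = n$ both correspond to $\HGr(r,n)$ being a single point, so the Thom space in question is the motivic sphere $S^{4rm,2rm}$; this is manifestly finite cellular and also handles $n = 0$.

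For the inductive step, fix $1 \leq r \leq n-1$ and use the closed embedding $N^{+} \hookrightarrow \HGr(\mathcal{F}) = \HGr(r,n)$ with open complement $Y$. Homotopy purity applied to the vector bundle $\mathcal{U}_{r,n}^{\oplus m}$ along this decomposition produces a homotopy cofiber sequence
\[
\Th\bigl(\mathcal{U}_{r,n}^{\oplus m}|_{Y}\bigr)
\longrightarrow
\Th(\mathcal{U}_{r,n}^{\oplus m})
\longrightarrow
\Th\bigl(\mathcal{U}_{r,n}^{\oplus m}|_{N^{+}} \oplus \mathcal{N}\bigr).
\]
The identifications $\mathcal{U}_{r,n}|_{N^{+}} \cong \pi_{+}^{\ast}\mathcal{U}_{r,n-1}$ and $\mathcal{N}\cong\pi_{+}^{\ast}\mathcal{U}_{r,n-1}$ recorded in the excerpt, combined with $\AA^{1}$-invariance along the vector bundle projection $\pi_{+}\colon N^{+} \to \HGr(r,n-1)$, identify the right-hand term with $\Th(\mathcal{U}_{r,n-1}^{\oplus(m+1)})$, which is finite cellular by the induction hypothesis.

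For the left-hand term I would invoke Panin--Walter's analysis of $Y$ from \cite[\S5]{panin-walterGP}: they describe $Y$, possibly after a further open-closed stratification, as an appropriate bundle over $\HGr(r-1,n-1)$ with the restricted tautological bundle $\mathcal{U}_{r,n}|_{Y}$ pulling back from $\mathcal{U}_{r-1,n-1}$ up to trivial summands. Combining this geometric input with $\AA^{1}$-invariance and the basic facts of Section \ref{section:cellularobjects} reduces the left-hand term to finite cellularity of $(p,q)$-suspensions of $\Th(\mathcal{U}_{r-1,n-1}^{\oplus m'})$ for suitable $m'$, which holds by the induction hypothesis together with Lemma \ref{lemma:suspensioncellular}. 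Lemma \ref{lemma:homotopycofibersequence} applied to the displayed cofiber sequence then completes the inductive step, and the ``in particular'' claim is the special case $m = 0$.

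The main obstacle is extracting the precise geometric description of $Y$ and $\mathcal{U}_{r,n}|_{Y}$ from \cite{panin-walterGP} in a form that makes the left-hand Thom space tractable by the inductive hypothesis; a naive induction on $n$ alone is not enough, because $Y$ is not directly a smaller quaternionic Grassmannian but only motivically equivalent to one (or to an iterated extension built from one). Once that geometric input is secured, the remainder is a routine application of the four basic facts on cellular motivic spectra collected in Section \ref{section:cellularobjects}.
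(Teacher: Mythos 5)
Your proposal is correct and is essentially the paper's own argument: the same purity cofiber sequence $\Th(\mathcal{U}_{r,n}^{\oplus m}\vert Y)\to\Th(\mathcal{U}_{r,n}^{\oplus m})\to\Th(\mathcal{U}_{r,n}^{\oplus m}\vert N^{+}\oplus\mathcal{N})$ (justified via \cite[Lemma 3.5]{MR2771593}), the same identification of the cofiber with $\Th(\mathcal{U}_{r,n-1}^{\oplus (m+1)})$ using $\AA^{1}$-invariance along $\pi_{+}$, and the geometric input you defer to Panin--Walter is precisely their Theorem 5.1, which presents $Y$ through a zig-zag of torsors under vector bundles $Y\leftarrow Y_{1}\leftarrow Y_{2}\to\HGr(r-1,\mathcal{E},\psi)$ along which $\mathcal{U}_{r,n}$ pulls back to $\mathcal{O}_{Y_{2}}^{2}\oplus q^{\ast}\mathcal{U}_{r-1,n}$, making the left-hand term $\Sigma^{2m,m}\Sigma^{\infty}\Th(\mathcal{U}_{r-1,n}^{\oplus m})$. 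The only organizational difference is that the paper runs a double induction on $r$ and $n\geq r$ rather than on $n$ alone, which is the safer bookkeeping since the open stratum lowers $r$.
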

\begin{proof}
The proof proceeds by a double induction argument on $r$ and $n\geq r$.
The base cases $\HGr(0,n)$ and $\HGr(n,n)$ are clear, so we may assume $0<r<n$.
Define the motivic space $Z$ by the homotopy cofiber sequence
\begin{equation}
\label{equation:homotopycofibersequence}
\xymatrix{
\Th(\mathcal{U}_{r,n}^{\oplus m}\vert Y)
\ar[r] & 
\Th(\mathcal{U}_{r,n}^{\oplus m})
\ar[r] & 
Z. }
\end{equation}
According to \cite[Lemma 3.5]{MR2771593} there is a canonical isomorphism in the motivic homotopy category
\[
Z
\cong
\Th(\mathcal{U}_{r,n}^{\oplus m}\vert N^{+}\oplus \mathcal{N}).
\]
Using the above we note $\mathcal{U}_{r,n}^{\oplus m}\vert N^{+}\oplus \mathcal{N}\cong \pi_{+}^{\ast}\mathcal{U}_{r,n-1}^{\oplus (m+1)}$ and hence there are canonical isomorphisms 
\[
Z
\cong
\Th(\pi_{+}^{\ast}\mathcal{U}_{r,n-1}^{\oplus (m+1)})
\cong
\Th(\mathcal{U}_{r,n-1}^{\oplus (m+1)}).
\]
By induction hypothesis $\Sigma^{\infty}Z$ is a finite cellular spectrum.
Thus Lemma \ref{lemma:homotopycofibersequence} and \eqref{equation:homotopycofibersequence} reduce the proof to showing that $\Sigma^{\infty}\Th(\mathcal{U}_{r,n}^{\oplus m}\vert Y)$ 
is a finite cellular spectrum.
To this end we recall parts of Theorem 5.1 in \cite{panin-walterGP}:
There exists maps 
\begin{equation*}
\xymatrix{
Y & Y_{1} \ar[l]_-{g_{1}} & Y_{2} \ar[l]_-{g_{2}} \ar[r]^-{q} & \HGr(r-1,\mathcal{E},\psi), 
}
\end{equation*}
where $g_{i}$ and $q$ are Zariski locally trivial torsors over vector bundles of rank $2r-i$ and $4n-3$,
respectively.
Moreover,
$g_{2}^{\ast}g_{1}^{\ast}\mathcal{U}_{r,n}$ is isomorphic to $\mathcal{O}_{Y_{2}}^{2}\oplus q^{\ast}\mathcal{U}_{r-1,n}$.
Invoking \cite[\S3.2, Example 2.3]{MR1813224} this implies the canonical isomorphisms 
\[
\Sigma^{\infty}\Th(\mathcal{U}_{r,n}^{\oplus m}\vert Y)
\cong
\Sigma^{\infty}\Th(g_{2}^{\ast}g_{1}^{\ast}\mathcal{U}_{r,n}^{\oplus m}\vert Y)
\cong
\Sigma^{\infty}\Th(\mathcal{O}_{Y_{2}}^{2m}\oplus q^{\ast}\mathcal{U}_{r-1,n}^{\oplus m})
\cong
\Sigma^{2m,m}\Sigma^{\infty}\Th(\mathcal{U}_{r-1,n}^{\oplus m}).
\]
Here, 
the suspension spectrum of $\Th(\mathcal{U}_{r-1,n}^{\oplus m})$ is finite cellular by the induction hypothesis.
This finishes the proof using Lemma \ref{lemma:suspensioncellular}.
\end{proof}

\section{Hermitian $K$-theory and Witt-theory}
\label{section:hermitianKtheoryandWitttheory}

In this section we finish the proof of Theorem \ref{theorem:KQcellular} stated in the introduction.

The quaternionic plane $\HP^{1}$ is the first quaternionic Grassmannian $\HGr(1,2)$.
In the pointed unstable motivic homotopy category, 
$(\HP^{1},x_{0})$ is isomorphic to the two-fold smash product of the Tate object $T\equiv\AA^{1}/\AA^{1}\smallsetminus\{0\}$. 
It follows that the $\AA^{1}$-mapping cone $\HP^{1+}$ of the rational point $x_{0}\colon S\to\HP^{1}$ is isomorphic to $T^{\wedge 2}$.
Hence the stable homotopy category of $\HP^{1+}$-spectra is equivalent to the standard model for the stable motivic homotopy category \cite[Theorem 12.1]{panin-walterKO}.

Theorem 12.3 in \cite{panin-walterKO} shows there is an isomorphism between hermitian $K$-theory $\KQ$ and an $\HP^{1+}$-spectrum $\BO^{\geom}$.
For $n$ odd, 
$\BO^{\geom}_{2n}=\ZZ\times\HGr$ \cite[(12.5)]{panin-walterKO}.
Here $\HGr$ denotes the infinite quaternionic Grassmannian, 
i.e., 
the sequential colimit 
\[
\underset{n}{\colim}\;\HGr(n,2n).
\]
We note that the transition maps in the colimit are defined in \cite[(8.1)]{panin-walterKO}.
The motivic space $\ZZ\times\HGr$ is pointed by $(0,\HGr(0,0))$.
Thus $\KQ$ is isomorphic to the homotopy colimit 
\begin{equation}
\label{equation:KQcolimit}
\underset{n\text{ odd}}{\hocolim}\;\Sigma^{4n,2n}\Sigma^{\infty}\ZZ\times\HGr.
\end{equation}
It remains to show cellularity of \eqref{equation:KQcolimit}.
Note that $\Sigma^{\infty}\ZZ\times\HGr$ is a homotopy colimit of cellular spectra by Lemma \ref{lemma:disjointcellular} and Proposition \ref{proposition:finitecellular}.
It follows that $\Sigma^{4n,2n}\Sigma^{\infty}\ZZ\times\HGr$ is cellular according to Lemmas \ref{lemma:colimitcellular} and \ref{lemma:suspensioncellular}.
We conclude the proof for $\KQ$ by applying Lemma \ref{lemma:colimitcellular}.

Cellularity of $\KT$ follows from that of $\KQ$ via Lemma \ref{lemma:colimitcellular} and the description of $\KT$ as the homotopy colimit of the diagram 
\begin{equation*}
\xymatrix{
\KQ \ar[r]^-{\eta} & \Sigma^{-1,-1}\KQ \ar[rr]^-{\Sigma^{-1,-1}\eta} && \Sigma^{-2,-2}\KQ \ar[rr]^-{\Sigma^{-2,-2}\eta} && \dotsm
}
\end{equation*}
given in \cite[Theorem 6.5]{ananyevskiy.sl}. Here, 
$\eta$ is the first stable Hopf map induced by the canonical map $\AA^{2}\smallsetminus\{0\}\rightarrow\PP^{1}$.

\begin{footnotesize}
%\bibliographystyle{plain}
%\bibliography{KQcellular}

\begin{thebibliography}{10}

\bibitem{ananyevskiy.sl}
A.~Ananyevskiy.
\newblock On the relation of special linear algebraic cobordism to witt groups.
\newblock {\tt arXiv:1212.5780}.

\bibitem{MR0366940}
A.~Bia{\l}ynicki-Birula.
\newblock Some theorems on actions of algebraic groups.
\newblock {\em Ann. of Math. (2)}, 98:480--497, 1973.

\bibitem{MR2178658}
P.~Brosnan.
\newblock On motivic decompositions arising from the method of
  {B}ia{\l}ynicki-{B}irula.
\newblock {\em Invent. Math.}, 161(1):91--111, 2005.

\bibitem{MR2153114}
D.~Dugger and D.~C. Isaksen.
\newblock Motivic cell structures.
\newblock {\em Algebr. Geom. Topol.}, 5:615--652, 2005.

\bibitem{MR2122220}
J.~Hornbostel.
\newblock {${\bf A}^1$}-representability of {H}ermitian {$K$}-theory and {W}itt
  groups.
\newblock {\em Topology}, 44(3):661--687, 2005.

\bibitem{MR2061856}
F.~Morel.
\newblock On the motivic {$\pi_0$} of the sphere spectrum.
\newblock In {\em Axiomatic, enriched and motivic homotopy theory}, volume 131
  of {\em NATO Sci. Ser. II Math. Phys. Chem.}, pages 219--260. Kluwer Acad.
  Publ., Dordrecht, 2004.

\bibitem{MR1813224}
F.~Morel and V.~Voevodsky.
\newblock {${\bf A}^1$}-homotopy theory of schemes.
\newblock {\em Inst. Hautes \'Etudes Sci. Publ. Math.}, (90):45--143 (2001),
  1999.

\bibitem{MR3255457}
K.~M. Ormsby and P.~A. {\O}stv{\ae}r.
\newblock Stable motivic {$\pi_1$} of low-dimensional fields.
\newblock {\em Adv. Math.}, 265:97--131, 2014.

\bibitem{panin-walterKO}
I.~Panin and C.~Walter.
\newblock On the motivic commutative ring spectrum $\mathsf{BO}$.
\newblock {\tt arXiv:1011.0650}.

\bibitem{panin-walterGP}
I.~Panin and C.~Walter.
\newblock Quaternionic {G}rassmannians and {P}ontryagin classes in algebraic
  geometry.
\newblock {\tt arXiv:1011.0649}.

\bibitem{roendigs-oestvaer.hermitian}
O.~R{\"o}ndigs and P.~A. {\O}stv{\ae}r.
\newblock Slices of hermitian ${K}$-theory and {M}ilnor's conjecture on
  quadratic forms.
\newblock {\tt arXiv:1311.5833}, to appear in Geometry \& Topology.

\bibitem{roendigs-spitzweck-oestvaer.1line}
O.~R{\"o}ndigs, M.~Spitzweck, and P.~A. {\O}stv{\ae}r.
\newblock The first stable homotopy groups of motivic spheres.
\newblock { }.

\bibitem{MR2771593}
M.~Spitzweck.
\newblock Relations between slices and quotients of the algebraic cobordism
  spectrum.
\newblock {\em Homology, Homotopy Appl.}, 12(2):335--351, 2010.

\end{thebibliography}

\end{footnotesize}
\vspace{0.1in}

\begin{center}
Institut f\"ur Mathematik, Universit\"at Osnabr\"uck, Germany.\\
e-mail: oliver.roendigs@uni-osnabrueck.de
\end{center}
\begin{center}
Institut f\"ur Mathematik, Universit\"at Osnabr\"uck, Germany.\\
e-mail: markus.spitzweck@uni-osnabrueck.de
\end{center}
\begin{center}
Department of Mathematics, University of Oslo, Norway.\\
e-mail: paularne@math.uio.no
\end{center}

\end{document}